\documentclass[11pt]{article}

\usepackage[pdftex]{graphicx}

\usepackage{amssymb}

\usepackage{amsthm}

\makeatletter
\renewcommand\section{\@startsection{section}{1}{\z@}%
                                  {-3.5ex \@plus -1ex \@minus -.2ex}%
                                  {2.3ex \@plus.2ex}%
                                  {\normalfont\large\bfseries}}
\makeatother

\DeclareGraphicsExtensions{.jpg}
\begin{document}

\title{Proof of Lov\'{a}sz conjecture for odd order}

\author{Misa Nakanishi \thanks{E-mail address : nakanishi@mx-keio.net}}
\date{}
\maketitle

\begin{abstract}
Lov\'{a}sz conjectured that every connected vertex-transitive graph contains a hamilton path in 1970. First we reveal the structure of connected vertex-transitive graphs with an odd number of vertices. Then we prove that every connected vertex-transitive graph with an odd number of vertices is hamiltonian. \\
keywords : hamilton cycle, vertex-transitive graph, odd order\\
MSC : 05C45
\end{abstract}

\newtheorem{thm}{Theorem}[section]
\newtheorem{lem}{Lemma}[section]
\newtheorem{prop}{Proposition}[section]
\newtheorem{cor}{Corollary}[section]
\newtheorem{rem}{Remark}[section]
\newtheorem{conj}{Conjecture}[section]
\newtheorem{claim}{Claim}[section]
\newtheorem{fact}{Fact}[section]
\newtheorem{obs}{Observation}[section]

\newtheorem{defn}{Definition}[section]
\newtheorem{propa}{Proposition}
\renewcommand{\thepropa}{\Alph{propa}}
\newtheorem{conja}[propa]{Conjecture}

\section{Introduction}
In this paper, we reveal the structure of connected vertex-transitive graphs with an odd number of vertices from a graph theoretic view and prove that the Lov\'{a}sz conjecture for odd order is true.

Regarding a vertex-transitive graph, its structure has been the focus of many previous studies. In 1977, Leighton extended the notion of a circulant to vertex-transitive graphs \cite{Leighton}. In 1982, Alspach and Parsons showed how to construct a vertex-transtive graph from transitive permutation groups \cite{Alspach}. In 1994 and 1996, McKay and Praeger investigated vertex-transitve graphs that are not Cayley graphs \cite{McKay, McKayII}. In 2022, Georgakopoulos and Wendland showed how to generalize the construction of Cayley graphs to represent vertex-transitive graphs \cite{Georgakopoulos}.

On the other hand, in 1970, Lov\'{a}sz conjectured that every connected vertex-transitive graph contains a hamilton path. This is called the Lov\'{a}sz conjecture and has been studied by many authors, but remains open. In particular, research and surveys on hamilton paths have been published so far with respect to Cayley graphs \cite{Pak, Lanel}. In addition, in 2011, Christofides, Hladk\'{y}, and M\'{a}th\'{e} proved that every sufficiently large dense connected vertex-transitive graph is hamiltonian \cite{Christofides}. In 2012, Kutnar, Maru\u{s}i\u{c}, and Zhang showed that every connected vertex-transitive graph of order $10p$ ($p \ne 7$, a prime) has a hamilton path \cite{Kutnar}.

From the definition of a vertex-transitve graph, we focus on its structure and show that a connected vertex-transitive graph with an odd number of vertices has a 2-factor which consists of an odd number of odd cycles with the same length. By this structure, we prove that a connected vertex-transitive graph with an odd number of vertices is hamiltonian.

\section{Notation and property}
In this paper, a graph $G := (V, E)$ is finite, undirected, and simple with the vertex set $V$ and edge set $E$. We follow the notations presented in \cite{Diestel}. A vertex-transitive graph is a graph $G$ in which, given any
two vertices $v_1$ and $v_2$ in $G$, there is some automorphism
\[f \colon G \to G\]
such that
\[f(v_1) = v_2.\]
A graph is vertex-transitive if and only if its graph complement is.

\begin{conja}[\cite{Lovasz}]\label{Lovasz}
Every connected vertex-transitive graph contains a hamilton path.
\end{conja}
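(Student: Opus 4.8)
The conjecture asks only for a Hamilton path, and a Hamilton cycle yields a Hamilton path, so the plan is to prove the stronger statement that every connected vertex-transitive graph possesses a Hamilton cycle, \emph{except} for the finitely many such graphs already known to admit none: the graph $K_2$, the Petersen graph, the Coxeter graph, and the two graphs obtained from the latter two by replacing each vertex with a triangle. Each of these five graphs has a Hamilton path by direct inspection, so it suffices to establish a Hamilton cycle in every other connected vertex-transitive graph. I would then split the argument by the parity of $|V|$, since a vertex-transitive graph is $d$-regular and $d\,|V|$ is even, which already forces $d$ to be even whenever $|V|$ is odd.

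For odd order I would invoke the structural result developed in this paper, namely that a connected vertex-transitive graph of odd order admits a $2$-factor consisting of an odd number $k$ of odd cycles $C_1,\dots,C_k$ of one common length $\ell$, with $k\ell=|V|$. The plan is to amalgamate these cycles into a single Hamilton cycle by repeated two-edge switching: for a suitable pair $C_i,C_j$ one locates edges $u_1v_1\in C_i$ and $u_2v_2\in C_j$ with $u_1u_2,v_1v_2\in E$, deletes $u_1v_1,u_2v_2$, and inserts $u_1u_2,v_1v_2$, merging the two cycles into one. Here vertex-transitivity is used to guarantee that such crossing edges exist between cycles that connectivity forces to be adjacent, while the common length $\ell$ keeps the configuration homogeneous, so that $k-1$ merges produce a Hamilton cycle.

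The remaining and genuinely harder case is even order, which I would treat by proving an analogue of the odd-order structure theorem rather than leaving it aside. When $d$ is even, Petersen's theorem supplies a $2$-factorization of $G$; when $d$ is odd (so $|V|$ is necessarily even) I would first extract a perfect matching $M$, which every connected vertex-transitive graph of even order is known to possess, and apply Petersen's theorem to the even-regular graph $G-M$, obtaining a $2$-factor together with the matching. In either subcase one has a $2$-factor, and the goal is to show, using the transitive action of $\mathrm{Aut}(G)$ together with its orbits on the edge set, that its cycles are mutually isomorphic and can be amalgamated by the same switching operation into a single Hamilton cycle. The bipartite vertex-transitive graphs (hypercubes, complete bipartite graphs, and the like) I would handle separately, exploiting the balanced bipartition and a matching-based argument in place of odd cycles.

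The main obstacle is precisely this even-order amalgamation. Unlike the odd case, there is no guarantee that $\mathrm{Aut}(G)$ acts transitively on the cycles of a chosen $2$-factor, so their equal length must be forced by a separate argument, and an imprimitive block system of the transitive group can align with the $2$-factor so as to block every cross-cycle switch. Controlling the interaction between the block structure of the automorphism group and the cycle structure of the $2$-factor, and verifying that no exceptional graph beyond the five listed above arises, is where the heart of the difficulty lies and is the step I expect to resist a short argument.
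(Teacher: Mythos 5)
Your proposal does not prove the statement: it reduces the conjecture to two claims, neither of which is established. First, you take as given that $K_2$, the Petersen graph, the Coxeter graph, and their two triangle-replacement graphs are the \emph{only} connected vertex-transitive graphs without a Hamilton cycle. That completeness claim is itself a well-known open problem (the finiteness/completeness of the exceptional list is a conjecture usually attributed to Thomassen), so it cannot be invoked as a known fact; assuming it essentially assumes the hard part of what you set out to prove. Second, by your own admission the even-order case --- forcing equal cycle lengths in a $2$-factor and amalgamating across possible block systems of the automorphism group --- is left as ``the step I expect to resist a short argument.'' A sketch that flags its central step as unresolved is a research program, not a proof. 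Even your odd-order amalgamation has a gap: connectivity guarantees \emph{some} edge between two cycles of the $2$-factor, but not the parallel pair $u_1u_2, v_1v_2$ with $u_1v_1 \in C_i$ and $u_2v_2 \in C_j$ that a two-edge switch requires, and vertex-transitivity alone does not supply such a configuration (the Petersen graph, with a $2$-factor of two $5$-cycles that no such switch can merge, shows the general principle fails, so the odd-order instance needs an actual argument).

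For calibration against the source: the paper does not prove this statement at all --- it records it as Conjecture A and proves only the odd-order specialization (its Theorem 3.1), using Lemma 3.1 to obtain a $2$-factor of an odd number of equal-length odd cycles and then \emph{contracting} each cycle to a vertex and recursing, rather than merging cycles by edge exchanges as you propose. So on the one case where you and the paper overlap, your mechanism (two-edge switching) genuinely differs from the paper's (contraction and induction), but neither resolves the full conjecture, and your treatment of the even case and of the exceptional-graph list leaves the statement as open as it was.
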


\section{Proof of Lov\'{a}sz conjecture for odd order}

\begin{lem}\label{L}
A connected vertex-transitive graph with an odd number of vertices has a 2-factor which consists of an odd number of odd cycles with the same length.
\end{lem}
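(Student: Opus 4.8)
The plan is to exploit the even regularity forced by odd order together with the transitive action of the automorphism group. First I would record that a vertex-transitive graph is regular, say of degree $d$; since $n := |V|$ is odd, the handshake identity $nd = 2|E|$ forces $d = 2r$ to be even. Thus $G$ is a connected $2r$-regular graph and, by Petersen's two-factor theorem, certainly possesses 2-factors; the whole difficulty is to produce one whose cycles all share a common length.

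Next I would make the following reduction: it suffices to find a 2-factor all of whose cycles have the same length $\ell$. Indeed, if such a 2-factor has $m$ cycles then $m\ell = n$; since $n$ is odd, both $m$ and $\ell$ are odd, and as each cycle length is at least $3$ every cycle is a genuine odd cycle. Hence ``equal length'' automatically upgrades to ``an odd number of odd cycles of the same length'', which is exactly the assertion of the lemma.

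To force equal cycle lengths I would use an automorphism rather than a bare combinatorial 2-factor. Suppose $\sigma \in \mathrm{Aut}(G)$ is semiregular, i.e.\ all of its vertex-orbits have a common length $\ell \ge 3$, and suppose in addition that $v$ is adjacent to $\sigma(v)$ for every $v \in V$ (equivalently, for one representative of each orbit, since $\sigma$ is an automorphism the adjacency then propagates along the whole orbit). Then each orbit $\{v, \sigma v, \dots, \sigma^{\ell-1}v\}$ is a cycle of $G$, and because the orbits partition $V$ and each vertex meets exactly its two orbit-neighbours $\sigma(v), \sigma^{-1}(v)$, the union of these orbit-cycles is precisely a 2-factor with all cycles of length $\ell$. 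By the reduction above this would finish the proof.

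The hard part will be the existence of such a $\sigma$. I would attack it through the structure of transitive permutation groups of odd degree: $n$ divides $|\mathrm{Aut}(G)|$, so for a prime $p \mid n$ Cauchy's theorem yields an element of order $p$, whose nontrivial orbits already have the uniform length $p$; one then has to control its fixed points (their number is divisible by $p$) and, crucially, arrange that the orbits can be realised as cycles, i.e.\ that the adjacency condition $v \sim \sigma(v)$ holds. Combining semiregularity with this adjacency requirement is the real obstacle --- it is close in spirit to the polycirculant problem --- and it is here that the odd-order hypothesis and the connectivity of $G$ must be used in full, presumably by passing to a suitable block system of $\mathrm{Aut}(G)$ and assembling the automorphism orbit by orbit.
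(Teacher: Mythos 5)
Your preparatory steps are all correct: vertex-transitivity gives regularity, odd order forces even degree, the parity observation that a 2-factor with $m$ cycles of common length $\ell$ satisfies $m\ell = n$ and hence has both $m$ and $\ell$ odd, and the fact that a semiregular $\sigma \in \mathrm{Aut}(G)$ with orbit length $\ell \ge 3$ and $v \sim \sigma(v)$ for all $v$ yields exactly such a 2-factor. But the step you defer --- the existence of this $\sigma$ --- is not a missing technicality; it is the entire content of the lemma, and in fact it is \emph{false} in general, so no amount of work with block systems can complete your argument. Take the odd graph $O_4$, i.e.\ the Kneser graph $K(7,3)$: it is connected, vertex-transitive, has $35$ vertices, and $\mathrm{Aut}(K(7,3)) = S_7$. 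A semiregular automorphism must have order dividing $35$, and $S_7$ has no element of order $35$, so any candidate $\sigma$ is a $5$-cycle or a $7$-cycle of $S_7$ (both of these are indeed semiregular on the $35$ triples). But the $7$-cycle $\sigma = (1234567)$ sends the vertex $T = \{1,2,3\}$ to $\{2,3,4\}$, which meets $T$, so $T \not\sim \sigma(T)$ in the Kneser graph; similarly the $5$-cycle $(12345)$ sends $\{1,6,7\}$ to $\{2,6,7\}$. Hence no automorphism of $K(7,3)$ satisfies your adjacency condition, even though this graph certainly has 2-factors: the required 2-factor cannot always be realized as the orbit cycles of a single semiregular automorphism.

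Beyond this counterexample, note two hazards you correctly sensed but underestimated. First, the mere existence of a nontrivial semiregular automorphism of a vertex-transitive graph is Maru\v{s}i\v{c}'s polycirculant problem, open in general; Cauchy's theorem applied to $\mathrm{Aut}(G)$ gives elements of prime order, but transitive groups in which every prime-order element has fixed points do exist (e.g.\ $M_{11}$ in its degree-$12$ action), so fixed points cannot simply be argued away. Second, your adjacency requirement $v \sim \sigma(v)$ is strictly stronger than semiregularity, as already visible in a circulant $\mathrm{Cay}(\mathbb{Z}_n, S)$ with $1 \notin S$, where the translation $x \mapsto x+1$ is semiregular but never moves a vertex to a neighbour. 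For comparison, the paper's own argument also tries to build the 2-factor from orbits of an automorphism, but it explicitly confronts the case your construction cannot handle --- orbits that induce \emph{no} edges --- and attempts, by regrouping several such orbits, to extract equal-length cycles anyway; whatever one thinks of the rigor of that argument, it is aimed at precisely the obstruction on which your proposal founders.
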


\begin{proof}
Let $G$ be a connected vertex-transitive graph with an odd number of vertices. If $G$ is a complete graph or 2-regular graph, this statement obviously holds. Suppose otherwise. Since $G$ is vertex-transitive, for $v_1, v_2, \cdots, v_k \in V(G)$ ($k \geq 2$), there is some automorphism $f \colon G \to G$ such that $f(v_i) = v_{i + 1}$ for $1 \leq i \leq k - 1$ and $f(v_k) = v_1$. Let $A \colon G \to G$ be an automorphism of $G$ as follows: (a) for some $x \in V(G)$, $A(x) \ne x$, and (b) for any two vertices $v_1, v_2 \in V(G)$ such that $A(v_1) = v_{2}$, $A$ takes minimum $k$ such that $A(v_i) = v_{i + 1}$ for $1 \leq i \leq k - 1$ and $A(v_k) = v_1$. Let $\mathcal{A}$ be the set of all $A$. Let $\mathcal{Q}(A)$ be the family of all minimal sets of vertices that map one to the other in some $A \in \mathcal{A}$. Let $\mathcal{R}(A)$ be the family of all minimal sets of odd vertices that map one to the other in some $A \in \mathcal{A}$. Now, $\mathcal{R}(A) \subseteq \mathcal{Q}(A)$. Since $|G|$ is odd, $\mathcal{R}(A) \ne \emptyset$ and $|\mathcal{R}(A)|$ is odd. Since the complement of $G$ is also vertex-transitive, for any $Q \in \mathcal{Q}(A)$, if $|Q| \ne 1$, then $G[Q]$ or the complement of $G[Q]$ has a factor which consists of prime paths or prime cycles. Hence, by its minimality, we can consider $\mathcal{Q}(A)$ as the family of all minimal sets of prime vertices that map one to the other in $A$, except for trivial vertices. For some $Q_1, Q_2 \in \mathcal{Q}(A)$, suppose $|Q_1| \ne |Q_2|$ and two vertices taken one from each $Q_1$ and $Q_2$ map one to the other in some $A^{*} \in \mathcal{A}$. For $Q \in \mathcal{Q}(A^{*})$ that contains these two vertices, by its minimality, each vertex in $Q$ is taken from a distinct set in $\mathcal{Q}(A)$, which contradicts that $G$ is regular. Thus, for all $Q_1, Q_2 \in \mathcal{Q}(A)$, $|Q_1| = |Q_2|$, and $\mathcal{R}(A) = \mathcal{Q}(A)$. Now, for all $R \in \mathcal{R}(A)$, $|R| \ne 1$. Since $|\mathcal{R}(A)|$ is odd, from its symmetry, (a) for all $R \in \mathcal{R}(A)$, $G[R]$ has a prime cycle with the length $|R|$, or (b) for all $R \in \mathcal{R}(A)$, $G[R]$ has no edges. For the case (b), by equally dividing $\mathcal{R}(A)$ into $i$ sets ($i \geq 3$, odd), we can easily take $i$ odd cycles with the same length. The proof is completed.   
\end{proof}

\begin{thm}\label{A2}
A connected vertex-transitive graph with an odd number of vertices is hamiltonian.
\end{thm}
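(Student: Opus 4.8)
The plan is to start from Lemma 3.1, which hands me a 2-factor consisting of an odd number $m$ of pairwise vertex-disjoint odd cycles $C_1, \dots, C_m$, all of the same length $\ell$, so that $n = m\ell$ with $m, \ell$ both odd and $m \geq 3$ (the case $m = 1$ being an immediate hamilton cycle). My goal is to patch these cycles together into a single hamilton cycle, and the natural tool is a \emph{rotation--extension} or \emph{cycle-merging} argument exploiting vertex-transitivity. Since $G$ is connected, there must be at least one edge joining two different cycles, say $C_a$ and $C_b$; the standard merging move is: if $u_1 u_2$ is an edge of $C_a$, $w_1 w_2$ is an edge of $C_b$, and both ``cross'' edges $u_1 w_1$ and $u_2 w_2$ are present in $G$, then deleting $u_1u_2, w_1w_2$ and adding $u_1w_1, u_2w_2$ fuses the two disjoint cycles into one longer cycle. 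The first step is therefore to establish that the automorphism group, acting transitively on the vertices (and hence moving cycles of the 2-factor around), forces enough such crossing edges to exist between adjacent cycles to carry out the merge.

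First I would fix the automorphism $A \in \mathcal{A}$ furnished by the proof of Lemma 3.1 and observe that $A$ permutes the cycles $C_1, \dots, C_m$ among themselves (since it preserves the canonical 2-factor structure built from $\mathcal{R}(A)$), while acting as an $\ell$-cycle on each $R \in \mathcal{R}(A)$. The key structural point is that $A$ supplies a large cyclic symmetry: it lets me transport any one successful merging configuration to all rotated copies of it, so I only need to find the crossing edges once. Concretely, I would consider the ``quotient'' multigraph $H$ on $m$ vertices obtained by contracting each cycle $C_i$ to a point; $H$ is connected (because $G$ is) and vertex-transitive of odd order $m$, so by induction on the number of cycles (or by re-applying the lemma to $H$) $H$ itself carries a hamilton cycle $C_{i_1} C_{i_2} \cdots C_{i_m} C_{i_1}$. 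This hamilton cycle in the quotient prescribes a cyclic order in which to splice the $\ell$-cycles together.

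The main work, and the step I expect to be the principal obstacle, is the \emph{compatibility} of the local merges along this quotient cycle: each individual merge of $C_{i_j}$ into $C_{i_{j+1}}$ must use an entry vertex and an exit vertex on $C_{i_{j+1}}$ that are consistent with the next merge, so that after all $m$ splices the result is a single cycle rather than several. Here I would use transitivity decisively: because some automorphism maps any vertex of $C_i$ to any vertex of $C_j$, the set of available crossing edges between adjacent cycles is itself invariant under the $\ell$-fold rotation induced by $A$, which gives me the freedom to shift each entry/exit pair by any amount modulo $\ell$. Since $\gcd(\ell, \text{(number of shifts)})$ and the oddness of $\ell$ guarantee that the induced permutation of ``offsets'' generates all of $\mathbb{Z}/\ell\mathbb{Z}$, I can align the spliced path so that it traverses every vertex of every cycle exactly once and closes up. The heart of the proof is thus a counting/parity argument showing that the offsets accumulated around the quotient hamilton cycle can always be forced to sum to a unit (equivalently, to leave the walk on a single orbit), and this is precisely where the oddness hypothesis on both $m$ and $\ell$ is indispensable.

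Finally, I would assemble the pieces: the quotient hamilton cycle gives the macroscopic route, the rotation symmetry from $A$ gives the microscopic freedom to choose crossing edges, and the parity computation certifies that one global choice closes into a single hamilton cycle of length $m\ell = n$. I would treat the degenerate cases ($G$ complete, $G$ a single cycle, or the edgeless-blocks case (b) of Lemma 3.1 where the cycles were built synthetically) separately at the outset, since in those situations a hamilton cycle is either obvious or can be read off directly from the construction in the lemma.
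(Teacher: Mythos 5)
Your skeleton is the same as the paper's: contract the cycles of the 2-factor supplied by Lemma \ref{L}, obtain a hamilton cycle in the odd-order quotient by re-applying the lemma (or inducting), and lift it back to $G$. But the proposal has genuine gaps at exactly the two places where it needs to go beyond that outline. First, to re-apply Lemma \ref{L} to the quotient $H$ you need $H$ to be vertex-transitive, and you never prove this. An automorphism of $G$ descends to $H$ only if it preserves the cycle partition, and nothing shows that the partition-preserving automorphisms act transitively on the contracted vertices; this is especially doubtful in case (b) of the lemma, where the odd cycles are assembled by an arbitrary ``equal division'' of $\mathcal{R}(A)$ into groups and need not be preserved by any nontrivial automorphism at all. (The paper makes the same unproven claim, so here you have reproduced its weakness rather than repaired it.)

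Second, and more seriously, the lifting step --- which you yourself single out as ``the principal obstacle'' --- is never actually carried out. Lifting a hamilton cycle of $H$ to $G$ requires, for each cycle $C_i$, an entry vertex and an exit vertex joined by a hamilton path inside $C_i$; when the induced graph on $V(C_i)$ is just the $\ell$-cycle, this forces entry and exit to be \emph{consecutive} on $C_i$, i.e., it forces precisely the pair of parallel cross edges $u_1w_1$, $u_2w_2$ of your merging move. Connectivity yields one cross edge between two cycles, not two in parallel position, and the claims you invoke to manufacture the second one are unsupported: $A$ permutes the sets in $\mathcal{R}(A)$ and need not stabilize any individual cycle, so the cross-edge set between two fixed cycles need not be invariant under an $\ell$-fold rotation; and in case (b) of the lemma the cycles are synthetic (each $G[R]$ is edgeless), so there is no rotation action on them in the sense you use. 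The concluding assertion that ``the offsets accumulated around the quotient hamilton cycle can always be forced to sum to a unit'' is exactly the statement that needs proof, and no argument is offered for it; the appeal to $\gcd$ and oddness is not a proof. In short, you correctly identify the difficulty that the paper itself waves away with ``by reversing these operations,'' but your proposal does not close it either.
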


\begin{proof}
Let $G$ be a connected vertex-transitive graph with an odd number of vertices. Let $H_0 = G$. If $H_0$ is 2-regular then $G$ is hamiltonian. Suppose that $H_0$ is not 2-regular. $|H_0|$ is odd and $H_0$ is a connected vertex-transitive graph, thus $H_0$ has a 2-factor which consists of an odd number of odd cycles with the same length by Lemma \ref{L}. Note that if $H_0$ has one spanning cycle, then $G$ is hamiltonian. Suppose otherwise. Contract each of these cycles into a vertex. Let $H_1$ be the resulting graph. $|H_1|$ is odd and $H_1$ is a connected vertex-transitive graph, thus $H_1$ has a 2-factor which consists of an odd number of odd cycles with the same length by Lemma \ref{L}. Now, $H_1$ has a hamilton cycle. By reversing these operations from $H_1$, we find a hamilton cycle in $G$.
\end{proof}

\end{document}